\newtheorem{theorem}{Theorem}
\newtheorem{conclusion}{Conclusion}
\newtheorem{proposition}{Proposition}
\newtheorem{example}{Example}
\newenvironment{proof}[1][Proof]{\noindent\textbf{#1.} }{\ \rule{0.5em}{0.5em}}
\begin{document}

\title{Loci of Points Inspired by Viviani's Theorem }
\author{Elias Abboud \\
{\small Faculty of education}, {\small Beit Berl College, Doar Beit Berl
44905, Israel }\\
{\small eabboud@beitberl.ac.il}}
\maketitle

Viviani (1622-1703), who was a student and assistant of Galileo, discovered
that equilateral triangles satisfy the following property: the sum of the
distances from the sides of any point inside an equilateral triangle is
constant.

Viviani's theorem can be easily proved by using areas. Joining a point $P$
inside the triangle to its vertices divides it into three parts. The sum of
their areas will be equal to the area of the original one. Therefore, the
sum of the distances from the sides will be equal to the altitude, and the
theorem follows.

Given a triangle (which includes both boundary and inner points), $\Delta ,$
and a positive constant, $k,$ we shall consider the following questions:

\textbf{Question 1}.\textbf{\ }\textit{What is the locus }$T_{k}(\Delta )$%
\textit{\ of points }$P$\textit{\ in }$\Delta $\textit{\ such that the sum
of the distances of }$P$\textit{\ to each of the sides of }$\Delta $\textit{%
\ equals }$k$\textit{? }

\textbf{Question 2}. \textit{What is the locus }$S_{k}(\Delta )$\textit{\ of
points }$P$\textit{\ in the plane such that the sum of the squares of the
distances of }$P$\textit{\ to each of the sides of }$\Delta $\textit{\
equals }$k$\textit{?}

By Viviani's theorem, the answer to the first question is straightforward
for equilateral triangles: If $k$ equals the altitude of an equilateral
triangle, then the locus of points is the entire triangle. On the other
hand, if $k$ is not equal to the altitude of the equilateral triangle then
the locus of points is empty.

Since the distance function is linear with two variable and because of
squaring the distances, the locus of points in the second question, if not
empty, is a quadratic curve. In our case, we shall see that this quadratic
curve is always an ellipse. This fact will be the basis of a new
characterization of ellipses.

We start first with the locus of points which have constant sum of distances
to each of the sides of a given triangle and consider separately the cases
of isosceles and scalene triangles. Then, we turn to the locus of points
which have constant sum of squared distances to each of the sides of a given
triangle. We prove the main theorem which gives a characterization of
ellipses and conclude by discussing minimal sum of squared distances.

\section*{Constant sum of distances}

Samelson \cite[p. 225]{Sam} gave a proof of Viviani's theorem that uses
vectors and Chen \& Liang \cite[p. 390-391]{CL} used this vector method to
prove a converse: if inside a triangle there is a circular region in which
the sum of the distances from the sides is constant, then the triangle is
equilateral. In \cite{Abb}, this converse is generalized in the form of two
theorems that we will use in addressing Question 1. To state these theorems,
we need the following terminology: Let $\mathcal{P}$ be a polygon consisting
of both boundary and interior points. Define the \textit{distance sum
function} $\mathcal{V}:\mathcal{P}\rightarrow \mathbb{R},$ where for each
point $P\in \mathcal{P}$ $\ $the value $\mathcal{V}(P)$ is defined as the
sum of the distances from $P$ to the sides of $\mathcal{P}.$\ \ 

\begin{theorem}
\label{equilateral}Any triangle can be divided into parallel line segments
on which $\mathcal{V}$ is constant. Furthermore, the following conditions
are equivalent:

\begin{itemize}
\item $\mathcal{V}$ is constant on\textit{\ }the\textit{\ triangle }$\Delta
. $

\item \textit{There are three non-collinear points, inside the triangle, at
which }$\mathcal{V}$ takes the same value\textit{.}

\item \textit{\ }$\Delta $ is \textit{equilateral. }
\end{itemize}
\end{theorem}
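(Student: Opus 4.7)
The plan is to exploit the fact that, for a point $P$ inside the triangle, each of the three perpendicular distances from $P$ to the lines containing the sides is an affine function of the coordinates of $P$. Hence $\mathcal{V}$ is itself an affine function on $\Delta$, say $\mathcal{V}(P) = \alpha x + \beta y + \gamma$ for some constants $\alpha, \beta, \gamma$. This single observation drives everything that follows.

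For the first assertion, I would argue that the level sets of any affine function of two variables are either parallel lines (when $(\alpha,\beta)\neq (0,0)$) or the entire plane (when $\alpha=\beta=0$). Restricting these level lines to $\Delta$ gives a decomposition of the triangle into parallel segments on which $\mathcal{V}$ is constant, proving the first claim.

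For the equivalence, I would run the implications in a short cycle. The implication from $\mathcal{V}$ constant on $\Delta$ to coincidence at three non-collinear interior points is trivial. For the reverse, since an affine function of two variables is determined by its values at any three non-collinear points, equality at three such points forces $\alpha=\beta=0$ and hence $\mathcal{V}\equiv \gamma$ on $\Delta$. To pass from constancy to equilaterality, I would evaluate $\mathcal{V}$ at the three vertices: at vertex $A$ the two adjacent-side distances vanish and the remaining distance is the altitude $h_a$, so $\mathcal{V}(A)=h_a$, and similarly $\mathcal{V}(B)=h_b$, $\mathcal{V}(C)=h_c$. Constancy of $\mathcal{V}$ then yields $h_a=h_b=h_c$, and combined with the area identity $a h_a = b h_b = c h_c = 2[\Delta]$ this forces $a=b=c$. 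The closing implication, equilateral $\Rightarrow$ $\mathcal{V}$ constant, is exactly Viviani's theorem, already proved via the area argument quoted in the introduction.

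The only genuinely delicate point is the passage from ``three non-collinear points with the same value'' to ``$\mathcal{V}$ constant on all of $\Delta$'', because it relies essentially on the interior-only affineness: if a test point lay on a side, one of the distance functions would change sign-convention across that side and $\mathcal{V}$ would no longer be a single affine expression. I would therefore emphasize at the outset that the three witnessing points are required to be strictly inside $\Delta$, so that the three distance functions share one consistent linear expression throughout a neighborhood of each, and the unique-affine-extension argument applies cleanly.
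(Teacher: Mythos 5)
Your proof is correct and rests on exactly the observation that the paper itself points to: $\mathcal{V}$ is an affine function of two variables on the closed triangle (each perpendicular distance has a fixed sign there), so its level sets are parallel lines, it is determined by its values at three non-collinear points, and its values at the vertices are the altitudes, which are equal only for an equilateral triangle. The paper defers the formal argument to \cite{Abb} (packaged there via linear programming and analytic geometry), but the underlying mechanism is the same linearity of $\mathcal{V}$, so your route is essentially the paper's; your closing caution is even slightly stronger than needed, since the affine expression for each distance remains valid up to and including the boundary.
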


\begin{theorem}
\label{convex polygon}(a) Any convex polygon $\mathcal{P}$ can be divided
into parallel line segments, on which $\mathcal{V}$ is constant. \ 

(b) If $\ \mathcal{V}$ takes equal values at three non-collinear points,
inside a convex polygon, then $\mathcal{V}$ is constant on $\mathcal{P}$ .
\end{theorem}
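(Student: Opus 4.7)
The plan is to exploit the fact that on the interior of a convex polygon each distance-to-a-side function is actually affine (not merely piecewise affine), so $\mathcal{V}$ is an affine function of the coordinates. Both parts then follow from elementary facts about affine functions on the plane.

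First, I would set things up as follows. Let $\mathcal{P}$ be a convex polygon with sides $\ell_1,\dots,\ell_n$ on lines $L_1,\dots,L_n$. Choose, for each side, the unit normal $\mathbf{n}_i$ pointing into $\mathcal{P}$. If $L_i$ has equation $\mathbf{n}_i\cdot\mathbf{x}=c_i$, then for every point $P\in\mathcal{P}$ the \emph{signed} distance $\mathbf{n}_i\cdot P-c_i$ is nonnegative (this is precisely the convexity of $\mathcal{P}$: the polygon is the intersection of the half-planes $\mathbf{n}_i\cdot\mathbf{x}\geq c_i$). Consequently the ordinary distance $d_i(P)=|\mathbf{n}_i\cdot P-c_i|$ agrees with the affine expression $\mathbf{n}_i\cdot P-c_i$ on all of $\mathcal{P}$, and so
\[
\mathcal{V}(P)=\sum_{i=1}^n d_i(P)=\Bigl(\sum_i \mathbf{n}_i\Bigr)\cdot P-\sum_i c_i
\]
is an affine function of $P$ on $\mathcal{P}$. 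This is the key observation; the rest is purely linear algebra.

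For part (a), write $\mathcal{V}(P)=\mathbf{a}\cdot P+b$ with $\mathbf{a}=\sum_i\mathbf{n}_i$. If $\mathbf{a}=\mathbf{0}$, then $\mathcal{V}$ is constant on $\mathcal{P}$ and the conclusion is trivial (the polygon itself is one ``line segment family''; or, if we prefer, foliate it by arbitrary parallel chords). If $\mathbf{a}\neq\mathbf{0}$, the level sets $\{\mathcal{V}=t\}$ are parallel straight lines perpendicular to $\mathbf{a}$; intersecting each with the convex set $\mathcal{P}$ produces (by convexity) a line segment, and as $t$ ranges over $[\min_\mathcal{P}\mathcal{V},\max_\mathcal{P}\mathcal{V}]$ these parallel segments partition $\mathcal{P}$, which is the desired decomposition.

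For part (b), suppose $\mathcal{V}(P_1)=\mathcal{V}(P_2)=\mathcal{V}(P_3)$ at three non-collinear points. Writing $\mathcal{V}(P)=\mathbf{a}\cdot P+b$ as above, the two equations $\mathbf{a}\cdot(P_2-P_1)=0$ and $\mathbf{a}\cdot(P_3-P_1)=0$ express that $\mathbf{a}$ is orthogonal to two linearly independent vectors in the plane, forcing $\mathbf{a}=\mathbf{0}$. Hence $\mathcal{V}$ is constant on $\mathcal{P}$.

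The only genuine subtlety is justifying that each $d_i$ is truly affine on the whole of $\mathcal{P}$ rather than on a half-plane, and this is exactly where convexity is used; once that is secured, everything reduces to the identity principle for affine functions on the plane. I do not anticipate any other obstacle, and in particular no appeal to Theorem~\ref{equilateral} is needed—Theorem~\ref{convex polygon} is the natural convex-polygon extension of the mechanism underlying it.
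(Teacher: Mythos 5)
Your proof is correct and follows essentially the same route the paper indicates for this theorem (which it defers to \cite{Abb}): compute the distance sum function $\mathcal{V}$ by analytic geometry and observe that, by convexity, each distance coincides with a signed affine expression, so $\mathcal{V}$ is affine on $\mathcal{P}$ and both parts reduce to elementary facts about affine functions. Your vector formulation with inward unit normals is a clean way of carrying out that computation; no gap.
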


The discussion in \cite[p. 207-210]{Abb} lays out a connection between
linear programming and Theorems \ref{equilateral} and \ref{convex polygon}.
Theorem \ref{equilateral} is proved explicitly by defining a suitable linear
programming problem and Theorem \ref{convex polygon} is proved by means of
analytic geometry, where the distance sum function $\mathcal{V}$ is computed
directly and shown to be linear in two variables. We shall apply these
theorems to find the loci $T_{k}(\Delta )$ for isosceles and scalene
triangles $\Delta .$

\subsection*{Isosceles triangle}

For isosceles triangles we exploit their reflection symmetry to find
directly the line segments on which $\mathcal{V}$ is constant, and for
scalene triangles we apply any of the above theorems to show that $\mathcal{V%
}$ is constant on certain parallel line segments. To find the direction of
these line segments we compute explicitly the equation of $\mathcal{V}$. The
idea is illustrated through an example.

Since an isosceles triangle has a reflection symmetry across the altitude,
we conclude that; if the sum of distances of point $P$ from the sides of the
triangle\ is $k$, then the reflection point $P^{\prime }$ across the
altitude satisfies the same property. For such triangles we have the
following proposition.

\begin{proposition}
If $\Delta $ is a non-equilateral isosceles triangle and $k$ ranges between
the lengths of the smallest and the largest altitudes of the triangle, then $%
T_{k}(\Delta )$ is a line segment, whose end points are on the boundary of
the triangle, parallel to the base.
\end{proposition}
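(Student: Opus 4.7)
The plan is to exploit the reflection symmetry of $\Delta$ together with Theorem~\ref{convex polygon} to pin down the direction of the level sets of $\mathcal{V}$, and then to determine the range of attained values.

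By Theorem~\ref{convex polygon}(a), $\Delta$ can be foliated by parallel segments on which $\mathcal{V}$ is constant; equivalently, as noted in the discussion preceding the proposition, $\mathcal{V}$ is a linear function of two variables, so its level sets inside $\Delta$ are straight lines. Since $\Delta$ is not equilateral, Theorem~\ref{equilateral} prevents $\mathcal{V}$ from being constant, so the common direction of the level segments is uniquely determined. The reflection across the altitude to the base is a congruence of $\Delta$ that permutes the sides, and hence preserves $\mathcal{V}$; it therefore maps level sets to level sets and in particular preserves their common direction. A direction fixed by a reflection must be either parallel or perpendicular to the axis of that reflection.

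The crucial step is to rule out the parallel alternative. If the level segments were parallel to the altitude, then $\mathcal{V}$ would depend linearly only on the horizontal coordinate; combined with the symmetry $\mathcal{V}(P)=\mathcal{V}(P')$ across the altitude, this forces the linear coefficient to vanish and $\mathcal{V}$ to be constant, contradicting non-equilaterality. Hence the level segments are perpendicular to the axis of symmetry, i.e.\ parallel to the base.

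For the range of $k$, I would evaluate $\mathcal{V}$ at the three vertices. At the apex $A$ (which lies on the two lateral sides), $\mathcal{V}(A)$ equals the altitude $h_a$ from $A$ to the base; at each base vertex, $\mathcal{V}$ equals the altitude $h_b$ to the opposite lateral side, the two values coinciding by symmetry. Because $\Delta$ is non-equilateral, $h_a\neq h_b$, and these are precisely the longest and the shortest altitudes of $\Delta$. Linearity of $\mathcal{V}$ shows that the values attained on $\Delta$ fill exactly the closed interval with endpoints $h_a$ and $h_b$, so for every $k$ strictly between them the preimage $\mathcal{V}^{-1}(k)$ is a non-degenerate chord of $\Delta$ parallel to the base, with endpoints on the two lateral sides. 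The main obstacle, which requires the non-equilateral hypothesis and a careful combination of symmetry with linearity, is the step that excludes level segments parallel to the axis of symmetry; once this is done, the computation of the range is routine.
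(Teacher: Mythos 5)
Your proof is correct, but it takes a genuinely different route from the paper's. The paper argues by direct computation: for a chord $DE$ parallel to the base, the distance to the base is fixed along $DE$ and the two remaining distances sum to $DP\sin \alpha +PE\sin \alpha =DE\sin \alpha$ ($\alpha$ the base angle), so $\mathcal{V}$ is constant on $DE$; Theorem \ref{equilateral} then rules out any further points with the same value, and the admissible range of $k$ is found by letting $DE$ slide down to the base or shrink to the apex. You instead take the linearity of $\mathcal{V}$ (Theorem \ref{convex polygon}(a)) as the starting point and determine the direction of its level lines purely by symmetry: the reflection across the altitude preserves $\mathcal{V}$, so the level direction must be parallel or perpendicular to the axis, and the parallel alternative would force $\mathcal{V}$ to be constant, contradicting Theorem \ref{equilateral} for a non-equilateral triangle. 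Your identification of the range by evaluating $\mathcal{V}$ at the vertices (where it gives exactly the altitudes) is also slightly cleaner than the paper's limiting argument and mirrors the linear-programming discussion the paper reserves for the scalene case. What the paper's computation buys is an explicit, trigonometry-level verification that exhibits the constant value concretely; what your argument buys is that it actually uses the reflection symmetry the paper announces in its preamble but does not exploit in the proof itself, and it transfers verbatim to any figure with a reflection symmetry on which the distance sum is affine. Both arguments are sound.
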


\begin{proof}
In Fig. \ref{fig1}, if the line segment $DE$ is parallel to the base $BC$
then, when $P$ moves along $DE$, the length $a$ remains constant and $%
b+c=DP\sin \alpha +PE\sin \alpha =DE\sin \alpha ,$ which is also constant.
Hence, $a+b+c=k$ is constant on the segment $DE.$ By Theorem \ref%
{equilateral}, there are no other points in $\Delta $ with distance sum $k$
unless the triangle is equilateral.

If $a$ approaches zero then the length of $DE$ approaches the length of the
base $BC,$ and $k=a+DE\sin \alpha $ approaches the length of the altitudes
from $B$ (or $C).$ On the other hand, if the length of $DE$ approaches zero
then $k$ approaches the length of the altitude from $A$. Hence, $%
T_{k}(\Delta )$ is defined whenever $k$ ranges between the lengths of the
smallest and the largest altitudes of the triangle.
\end{proof}

\FRAME{ftbphFU}{3.9807in}{2.6472in}{0pt}{\Qcb{The locus of points is a line
segment parallel to the base. }}{\Qlb{fig1}}{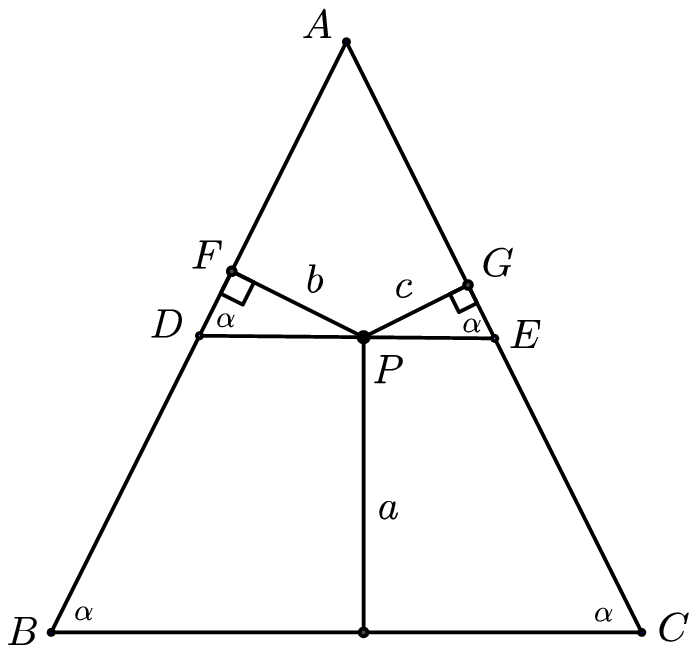}{\special{language
"Scientific Word";type "GRAPHIC";display "USEDEF";valid_file "F";width
3.9807in;height 2.6472in;depth 0pt;original-width 6.4013in;original-height
3.7395in;cropleft "0.1641";croptop "1";cropright "0.8111";cropbottom
"0";filename 'isoceles.eps';file-properties "XNPEU";}}

\subsection*{Scalene triangle}

Similarly, by Theorem \ref{equilateral}, the locus of points $T_{k}(\Delta )$
for a scalene triangle $\Delta $ is a line segment, provided $k$ ranges
between the lengths of the smallest and the largest altitudes of the
triangle. Otherwise, $T_{k}(\Delta )$ will be empty. Indeed, the connection
with linear programming allows us to deduce the result. The convex polygon
is taken to be the feasible region, and the distance sum function $\mathcal{V%
}$ corresponds to the objective function. The parallel line segments, on
which $\mathcal{V}$ is constant, correspond to the isoprofit lines. The
mathematical theory behind linear programming states that an optimal
solution to any problem will lie at a corner point of the feasible region.
If the feasible region is bounded, then both the maximum and the minimum are
attained at corner points. Now, the distance sum function $\mathcal{V}$ is a
linear continuous function in two variables. The values of $\mathcal{V}$ at
the vertices of the feasible region, which is the triangle $\Delta ,$ are
exactly the lengths of the altitudes. Moreover, this function attains its
minimum and its maximum at the vertices of the triangle, and ranges
continuously between its extremal values. Hence, \textbf{it takes\ on every
value between its minimum and its maximum}. Therefore, we have the following
proposition.

\begin{proposition}
If $\Delta $ is a scalene triangle and $k$ ranges between the lengths of the
smallest and the largest altitudes of the triangle, then $T_{k}(\Delta )$ is
a line segment, whose end points are on the boundary of the triangle.
\end{proposition}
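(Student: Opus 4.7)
The plan is to assemble the proposition from three ingredients already in hand: the parallel-segment structure of the level sets from Theorem~\ref{equilateral}, the linearity of $\mathcal{V}$, and an intermediate-value argument. The preceding paragraph in the paper essentially sketches this strategy; I will make it precise.

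First I would identify the extremal values of $\mathcal{V}$ on $\Delta$. Evaluating $\mathcal{V}$ at a vertex, say $A$, the distances to the two sides meeting at $A$ vanish, so $\mathcal{V}(A)$ equals the altitude $h_a$ from $A$. The analogous statements hold at $B$ and $C$. Since $\mathcal{V}$ is a linear function in the plane coordinates (as noted in the paper via the analytic-geometry proof of Theorem~\ref{convex polygon}), its restriction to the compact convex polygon $\Delta$ attains both its minimum and its maximum at vertices. Hence
\[
\min_{P\in\Delta}\mathcal{V}(P)=\min\{h_a,h_b,h_c\},\qquad \max_{P\in\Delta}\mathcal{V}(P)=\max\{h_a,h_b,h_c\}.
\]
Because $\Delta$ is scalene, the three altitudes are pairwise distinct, so the interval between them is nondegenerate.

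Next I would invoke continuity: $\mathcal{V}$ is continuous on the connected set $\Delta$, so by the intermediate value theorem applied along any path joining the vertex of minimum altitude to the vertex of maximum altitude, every value $k$ between the smallest and largest altitude is attained by $\mathcal{V}$ at some interior or boundary point of $\Delta$. Conversely, if $k$ lies strictly outside this interval, then $T_k(\Delta)$ is empty since $k$ exceeds the maximum or is below the minimum of $\mathcal{V}$.

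Finally I would describe the shape of $T_k(\Delta)$ using Theorem~\ref{equilateral}. That theorem partitions $\Delta$ into parallel line segments on which $\mathcal{V}$ is constant; since $\Delta$ is not equilateral, $\mathcal{V}$ is not constant on $\Delta$, so distinct segments carry distinct values and the level set $\{P\in\Delta:\mathcal{V}(P)=k\}$ is exactly one of these segments. Its endpoints lie on $\partial\Delta$ because each segment in the partition spans the triangle. The only real work is tying the three ingredients together cleanly; there is no substantive obstacle, since the heavy lifting (linearity, the parallel-segment decomposition, and the equivalence with equilaterality) is already carried by the earlier theorems.
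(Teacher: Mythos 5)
Your proposal is correct and follows essentially the same route as the paper: linearity of $\mathcal{V}$ forces its extrema to occur at the vertices (where its values are the altitudes), continuity gives every intermediate value, and Theorem~\ref{equilateral} identifies each nonempty level set of a non-equilateral triangle with a single parallel segment spanning the triangle. The paper merely dresses the same argument in linear-programming language (feasible region, objective function, isoprofit lines), so there is no substantive difference.
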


The question is: How can we determine this segment for a general triangle?
We shall illustrate the method by the following example. Given the vertices
of a triangle, we first compute the equations of the sides then we find the
distances from a general point $(x,y)$ inside the triangle to each of the
sides. In this way we obtain the corresponding distance sum function $%
\mathcal{V}$. Taking $\mathcal{V}=c,$ we get a family of parallel lines
which, for certain values of the constant $c,$ intersect the given triangle
in the desired line segments.

\begin{example}
\label{Ex1}Let $\Delta $ be the right angled triangle with vertices $%
(0,0),(0,3)$ and $(4,0),$ respectively (see Fig. \ref{fig2}). If the
constant sum of distances from the sides is $k,$ $2.4\leq k\leq $ $4,$ then
the locus is a line segment inside the triangle parallel to the line $2x+y=0$%
.

In Fig. \ref{fig2}, $T_{2.4}(\Delta )=\{A\},T_{2.8}(\Delta
)=DG,T_{3.2}(\Delta )=EH,T_{3.6}(\Delta )=FI$ and $T_{4}(\Delta )=\{C\}.$
Note that at the extremal values of $k,$ the segments shrink to a corner
point of the triangle $\Delta .$

Indeed, the smallest altitude of the triangle is $2.4$ and the largest
altitude is $4.$ Hence, by the previous proposition, the\ locus of points is
a line segment. To find this line segment, we compute the distance sum
function $\mathcal{V}.$ The equation of the hypotenuse is $3x+4y=12.$ Hence,%
\begin{equation*}
\mathcal{V}=x+y-\frac{3x+4y-12}{5}=\frac{2}{5}x+\frac{1}{5}y+\frac{12}{5}.
\end{equation*}%
Therefore, the lines $\mathcal{V}=c$ are parallel to the line $2x+y=0$ and
the result follows.
\end{example}

Note that the proofs of Theorems \ref{equilateral} and \ref{convex polygon}
follow the same line.

\FRAME{ftbphFU}{3.7879in}{2.4068in}{0pt}{\Qcb{ $T_{k}(\Delta ),2.4\leq k\leq 
$ $4,$ are line segments inside the triangle parallel to the line $2x+y=0$.}%
}{\Qlb{fig2}}{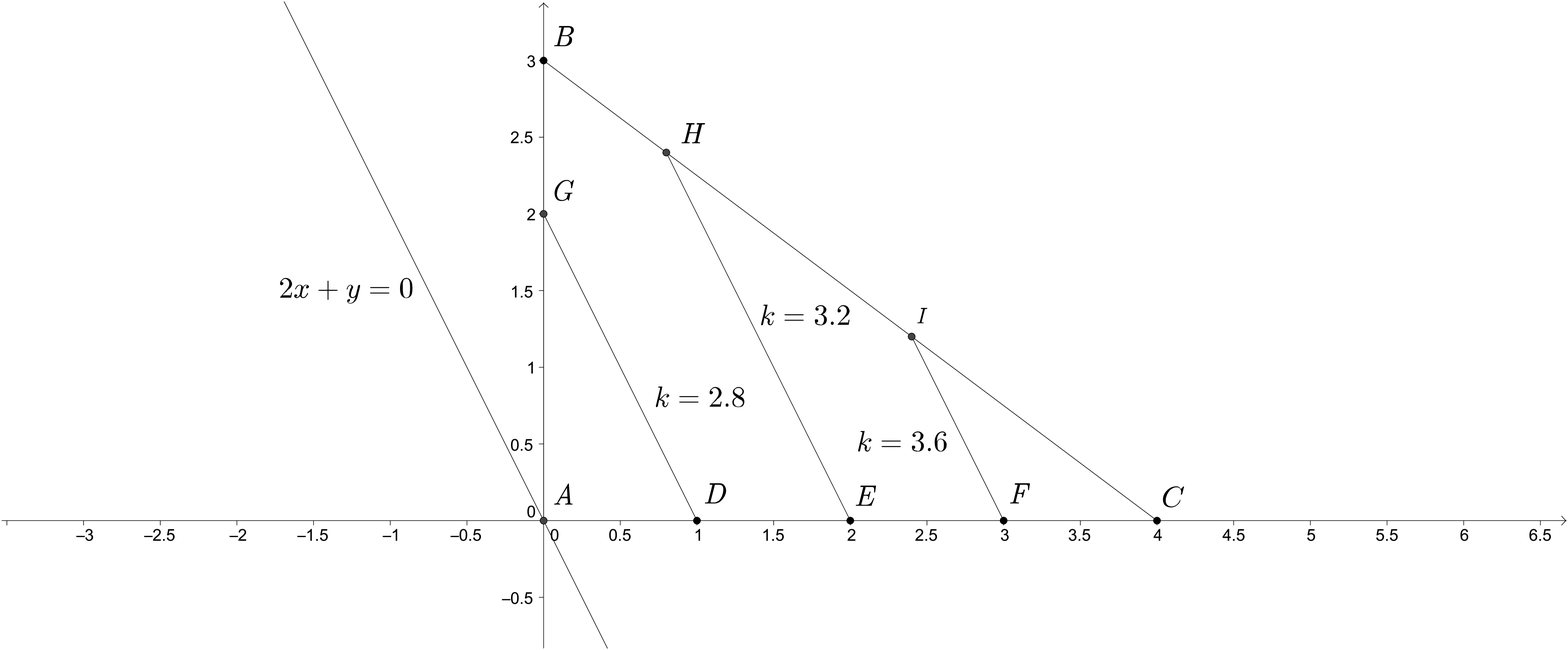}{\special{language "Scientific Word";type
"GRAPHIC";maintain-aspect-ratio TRUE;display "USEDEF";valid_file "F";width
3.7879in;height 2.4068in;depth 0pt;original-width 46.4612in;original-height
19.4764in;cropleft "0.1687";croptop "1";cropright "0.8312";cropbottom
"0";filename 'constant-sum.eps';file-properties "XNPEU";}}

\section*{Constant sum of squares of distances}

Motivated by the previous results, we deal with the second question of
finding the locus of points which have a constant sum of squares of
distances from the sides of a given triangle. Referring to Example \ref{Ex1}%
, $DE^{2}+DF^{2}+DG^{2}=5$ (the number 5 is chosen arbitrary) implies the
following equation of a quadratic curve;

\begin{equation*}
x^{2}+y^{2}+\left( \frac{3x+4y-12}{5}\right) ^{2}=5.
\end{equation*}%
Simplifying, one gets the equivalent equation

\begin{equation*}
34x^{2}+41y^{2}+24xy-72x-96y+19=0.
\end{equation*}%
This is exactly the equation of the ellipse shown in Fig. \ref{fig3}.

\FRAME{ftbphFU}{5.0185in}{3.0528in}{0pt}{\Qcb{Each point $D$ on the ellipse
satisfies $DE^{2}+DF^{2}+DG^{2}=5$}}{\Qlb{fig3}}{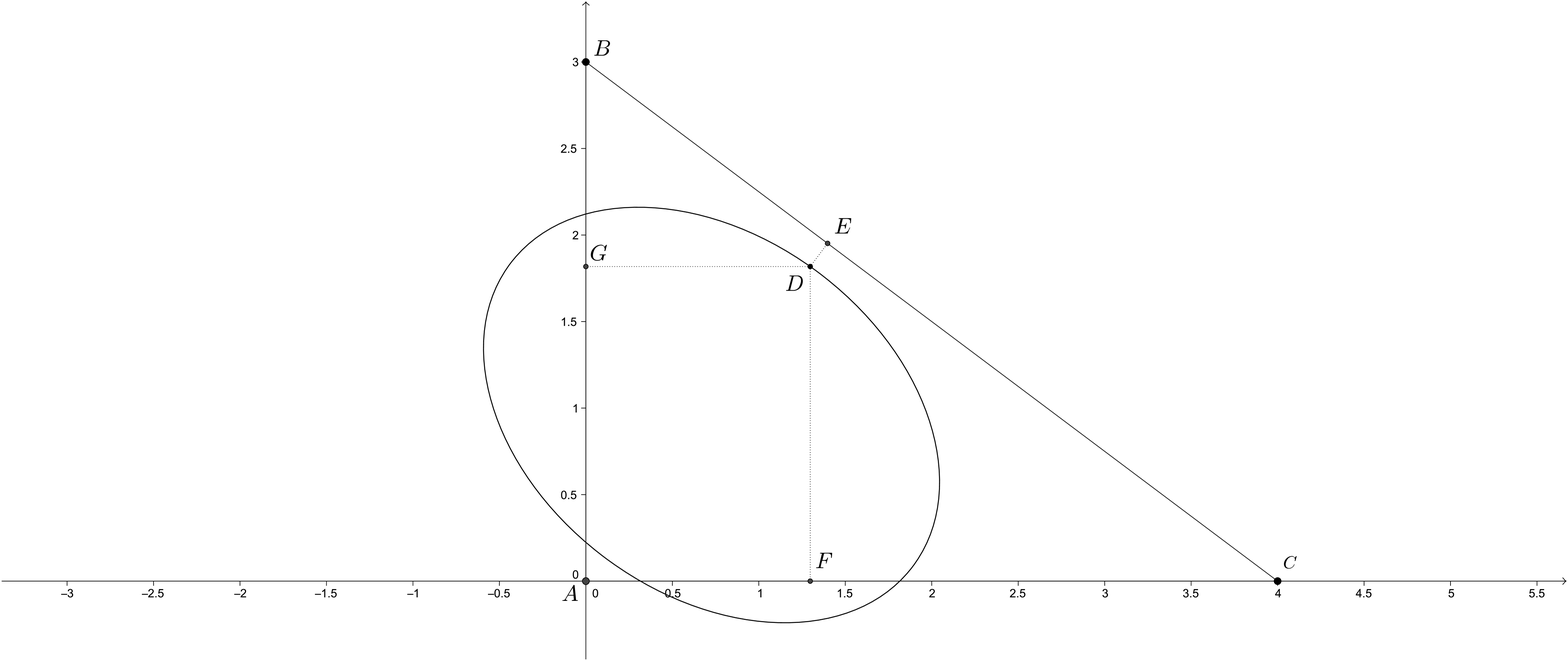%
}{\special{language "Scientific Word";type "GRAPHIC";maintain-aspect-ratio
TRUE;display "USEDEF";valid_file "F";width 5.0185in;height 3.0528in;depth
0pt;original-width 35.376in;original-height 15.0728in;cropleft
"0.2287";croptop "0.9562";cropright "0.8807";cropbottom "0.0155";filename
'square-of-distances-is-5.eps';file-properties "XNPEU";}}

\subsection*{ A new characterization of the ellipse}

In view of the previous example we may prove the following theorem, which
states that among all quadratic curves the ellipse can be characterized as
the locus of points that have a constant sum of squares of distances from
the sides of an appropriate triangle.

\begin{theorem}
The locus $S_{k}(\Delta ABC)$ of points that have a constant sum of squares
of distances from the sides of an appropriate triangle $ABC$ is an ellipse
(and vice versa).
\end{theorem}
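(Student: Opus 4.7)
The plan is to establish both directions of the equivalence. For the forward direction, I would write each side of $\Delta ABC$ in the normalized form $\ell_i(x,y)=a_ix+b_iy+c_i=0$ with $a_i^{2}+b_i^{2}=1$, so that the distance from $(x,y)$ to the $i$th side is exactly $|\ell_i(x,y)|$. The locus equation $\sum_{i=1}^{3}\ell_i(x,y)^{2}=k$ is then quadratic in $(x,y)$, with principal part governed by the matrix
\[
M=\begin{pmatrix}\sum a_i^{2} & \sum a_ib_i\\ \sum a_ib_i & \sum b_i^{2}\end{pmatrix}.
\]
By the Cauchy--Schwarz inequality, $\det M\geq 0$, with equality if and only if the three normal vectors $(a_i,b_i)$ are proportional. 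Because a non-degenerate triangle has at least two distinct edge directions, $\det M>0$; since also $\mathrm{tr}\,M=3>0$, the form $M$ is positive definite, and hence the locus is an ellipse whenever it is non-empty and not a single point.

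For the converse, given an arbitrary ellipse, I would first apply a rigid motion to bring it into standard form $x^{2}/a^{2}+y^{2}/b^{2}=1$. I would then look for an isosceles triangle with vertices $(-p,y_{0})$, $(p,y_{0})$, $(0,y_{0}+h)$ whose sum-of-squared-distances curve reproduces this ellipse. The reflection symmetry of the triangle across the $y$-axis forces the resulting quadratic to contain neither an $xy$ cross term nor an $x$ linear term, so its level sets are already ellipses centered on the $y$-axis. A direct computation, analogous to the one that produced the equation displayed just above Fig.~\ref{fig3}, writes the quadratic as $\alpha x^{2}+\beta(y-y_{c})^{2}+\gamma$, where $\alpha,\beta,y_{c},\gamma$ are explicit rational functions of $p,h,y_{0}$. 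Three conditions then suffice: $y_{c}=0$ (center at the origin), $\alpha/\beta=b^{2}/a^{2}$ (correct eccentricity), and a suitable value of $k$ (correct scale), giving enough equations to solve for the four parameters $p,h,y_{0},k$.

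The main obstacle is the surjectivity of the shape-matching equation $\alpha/\beta=b^{2}/a^{2}$. A short calculation shows that the ratio $\alpha/\beta$ produced by this family of isosceles triangles lies in the bounded interval $(0,2)$ as $h/p$ ranges over all positive reals; every ratio in $(0,\infty)$ is therefore reached by choosing whether the triangle is symmetric about the major or the minor axis of the ellipse. One must also verify that the resulting $p,h$ and $k$ are positive, so that the triangle is non-degenerate and the locus is non-empty; this follows from the positive definiteness established in the forward direction together with a continuity argument on the minimum value of $\sum\ell_i^{2}$. The remaining algebra is routine and follows the pattern of Example~\ref{Ex1}.
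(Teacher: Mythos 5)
Your proposal is correct, and its forward direction takes a genuinely different route from the paper's. The paper places the triangle with vertices $A(0,a)$, $B(-b,0)$, $C(c,0)$, expands the sum of squared distances in those coordinates, and checks by direct computation that the discriminant $\mathcal{B}^{2}-4\mathcal{A}\mathcal{C}=-\tfrac{4a^{2}}{pq}\left(b^{2}+2bc+c^{2}+p+q\right)$ is negative. Your argument---normalize the three linear forms, observe that the matrix $M$ of the quadratic part has $\det M\geq 0$ by Cauchy--Schwarz with equality only when all three unit normals are parallel, and note $\mathrm{tr}\,M=3$---reaches the same conclusion without choosing coordinates, and it generalizes verbatim to any finite set of lines that are not all parallel, a generalization the paper only gestures at in its concluding remarks. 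You are also more explicit than the paper about the degenerate cases (empty locus or a single point), which the paper defers to its later discussion of the minimal value of $k$. For the converse your route essentially coincides with the paper's: an isosceles triangle symmetric about an axis of the ellipse, a vertical translation to center the locus, and a matching of axis ratio and scale. Your observation that the achievable ratio of the $x^{2}$- to the $y^{2}$-coefficient fills only the interval $(0,2)$, so that the triangle's symmetry axis must be aligned with the appropriate axis of the ellipse, is precisely the point the paper handles implicitly by taking $\alpha\geq\beta$ and setting $\beta^{2}=2a^{2}$, $\alpha^{2}=a^{2}+3b^{2}$, which forces $b^{2}=\tfrac{1}{3}\left(\alpha^{2}-\beta^{2}/2\right)>0$; and your count of four parameters against three conditions is consistent with the paper's remark that distinct triangles can yield the same ellipse.
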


\begin{proof}
We shall prove the following two claims:

(a) Given a triangle, the locus of points which have a constant sum of
squares of distances from the sides is an ellipse. These loci, for different
values of the constant, are homothetic ellipses with respect to their common
center (their corresponding axes are proportional with the same factor).

(b) Given an ellipse, there is a triangle for which the sum of the squares
of the distances from the sides, for all points on the ellipse, is constant.

Using analytic geometry, we choose for part (a) the coordinate system such
that the vertices of the triangle lie on the axes. Suppose the coordinates
of the vertices of the triangle are $A(0,a),B(-b,0),C(c,0),$ where $a,b,c>0$
(see Fig. \ref{fig4}). Let $(x,y)$ be any point in the plane and let $%
d_{1},d_{2},d_{3}$ be the distances of $(x,y)$ from the sides of the
triangle $ABC$. It follows that%
\begin{equation*}
\underset{i=1}{\overset{3}{\sum }}d_{i}^{2}=\frac{(ax+cy-ac)^{2}}{a^{2}+c^{2}%
}+\frac{(ax-by+ab)^{2}}{a^{2}+b^{2}}+y^{2}.
\end{equation*}%
Hence, $\underset{i=1}{\overset{3}{\sum }}d_{i}^{2}=k$ (constant) if and
only if the point $(x,y)$ lies on the quadratic curve: 
\begin{equation}
\frac{(ax+cy-ac)^{2}}{a^{2}+c^{2}}+\frac{(ax-by+ab)^{2}}{a^{2}+b^{2}}%
+y^{2}=k.  \label{eq1}
\end{equation}

In general, a quadratic equation in two variables,

\begin{equation}
\mathcal{A}x^{2}+\mathcal{B}xy+\mathcal{C}y^{2}+\mathcal{D}x+\mathcal{E}y+%
\mathcal{F}=0,  \label{eq2}
\end{equation}%
represents an ellipse provided the discriminant $\delta =\mathcal{B}^{2}-4%
\mathcal{AC}<0$ is negative.

To prove our claim, note that from equation (\ref{eq1}) we have:%
\begin{equation}
\mathcal{A}=\frac{a^{2}}{p}+\frac{a^{2}}{q},\mathcal{B}=\frac{2ac}{p}-\frac{%
2ab}{q},\mathcal{C}=\frac{c^{2}}{p}+\frac{b^{2}}{q}+1  \label{eq3}
\end{equation}%
where, $p=$ $a^{2}+c^{2}$ and $q=$ $a^{2}+b^{2}.$ Therefore, 
\begin{equation*}
\delta =\mathcal{B}^{2}-4\mathcal{AC}=-4\frac{a^{2}}{pq}\left(
b^{2}+2bc+c^{2}+p+q\right) \text{,}
\end{equation*}%
and the result follows.

Now, we prove part (b). Applying rotation or translation of the axes we may
assume that the ellipse has a canonical form $\frac{x^{2}}{\alpha ^{2}}+%
\frac{y^{2}}{\beta ^{2}}=1,$ $\alpha \geq \beta >0.$ If we find positive
real numbers $a,b$ with $\alpha ^{2}=a^{2}+3b^{2},\beta ^{2}=2a^{2},$ then
the isosceles triangle with vertices $A^{\prime }(0,a-l),B^{\prime
}(-b,-l),C^{\prime }(b,-l),$ gives the required property in part (b), where $%
l=\frac{2ab^{2}}{a^{2}+3b^{2}}.$

Indeed, because of the symmetry of the ellipse, we choose first an isosceles
triangle with vertices $A(0,a),B(-b,0),C(b,0)$ and compute the sum of the
squared distances from its sides. Substituting $c=b$ in equation (\ref{eq1}%
), gives the equation of the locus $S_{k}(\Delta ABC)$ as

\begin{equation*}
\frac{(ax+by-ab)^{2}}{a^{2}+b^{2}}+\frac{(ax-by+ab)^{2}}{a^{2}+b^{2}}%
+y^{2}=k.
\end{equation*}%
Equivalently, we get a translation of a canonical ellipse:

\begin{equation}
\frac{x^{2}}{a^{2}+3b^{2}}+\frac{(y-\frac{2ab^{2}}{a^{2}+3b^{2}})^{2}}{2a^{2}%
}=\frac{(a^{2}+b^{2})k-2a^{2}b^{2}}{2a^{2}(a^{2}+3b^{2})}+\frac{2b^{4}}{%
(a^{2}+3b^{2})^{2}}.  \label{eq3.5}
\end{equation}%
Thus, it is enough to take 
\begin{equation*}
\frac{(a^{2}+b^{2})k-2a^{2}b^{2}}{2a^{2}(a^{2}+3b^{2})}+\frac{2b^{4}}{%
(a^{2}+3b^{2})^{2}}=1,
\end{equation*}%
and therefore,%
\begin{equation}
k=\frac{2a^{2}(a^{4}+7a^{2}b^{2}+10b^{4})}{(a^{2}+b^{2})(a^{2}+3b^{2})}.
\label{eq4}
\end{equation}

Translating downward by $l=\frac{2ab^{2}}{a^{2}+3b^{2}},$ we get that the
ellipse $\frac{x^{2}}{a^{2}+3b^{2}}+\frac{y^{2}}{2a^{2}}=1$ is the locus of
points, $S_{k}(\Delta A^{\prime }B^{\prime }C^{\prime }),$ which have a
constant sum of squares of distances from the sides of the triangle with
vertices $A^{\prime }(0,a-l),B^{\prime }(-b,-l),C^{\prime }(b,-l).$
Moreover, this constant is given by equation (\ref{eq4}).
\end{proof}

\FRAME{ftbphFU}{4.3145in}{2.2615in}{0pt}{\Qcb{The locus is an ellipse}}{\Qlb{%
fig4}}{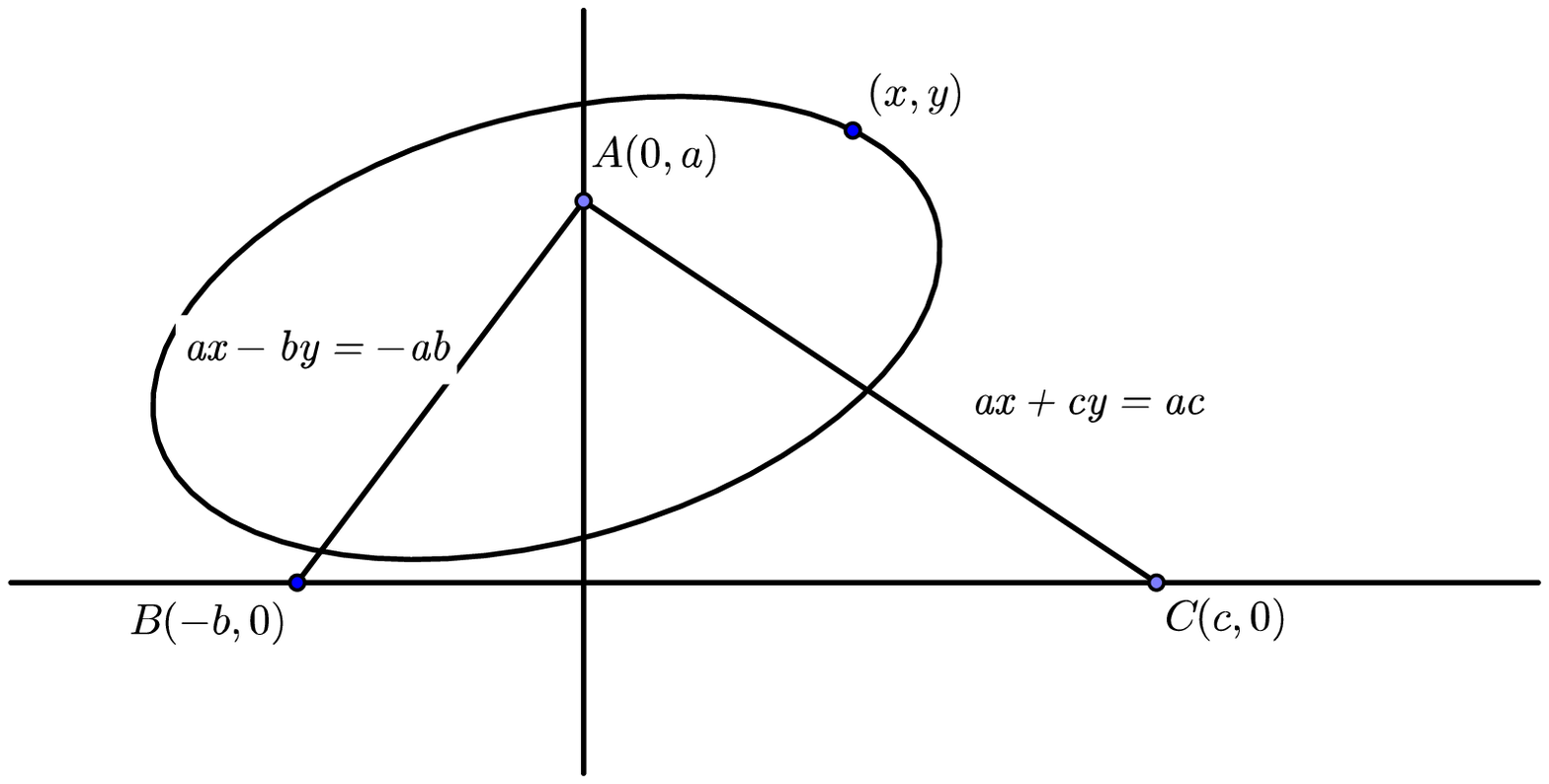}{\special{language "Scientific Word";type
"GRAPHIC";maintain-aspect-ratio TRUE;display "USEDEF";valid_file "F";width
4.3145in;height 2.2615in;depth 0pt;original-width 8.0652in;original-height
4.7063in;cropleft "0.0825";croptop "0.9036";cropright "0.7603";cropbottom
"0.0778";filename 'ellipse1.eps';file-properties "XNPEU";}}\ 

Table \ref{table1} includes two examples that demonstrate Theorem 3.

\begin{table}[H] \centering%
\begin{tabular}{ccccccc}
\hline
$a$ & $b$ & $\alpha ^{2}=a^{2}+3b^{2}$ & $\beta ^{2}=2a^{2}$ & $\text{Ellipse%
}$ & $k$ & $\text{Figure}$ \\ \hline
$1$ & $1$ & $4$ & $2$ & $\frac{x^{2}}{4}+\frac{y^{2}}{2}=1$ & $\frac{9}{2}$
& $\ref{fig5}$ \\ 
$\sqrt{3}$ & $1$ & $6$ & $6$ & $x^{2}+y^{2}=6$ & $10$ & $\ref{fig6}$ \\ 
\hline
\end{tabular}%
\caption{Examples}\label{table1}%
\end{table}%

Notice that, though a given triangle and a given constant $k$ yield at most
one ellipse, the same ellipse can be obtained using different triangles.
This can be easily seen in Fig. \ref{fig5}. By the above computations, the
ellipse $\frac{x^{2}}{4}+\frac{y^{2}}{2}=1$ is the locus of points $%
S_{4.5}(\Delta ),$ where $\Delta $ is the triangle with vertices $A^{\prime
}=$ $(0,0.5),B^{\prime }=(-1,-0.5)$ and $C^{\prime }=(1,-0.5).$ If we
reflect the whole figure across the $x-$axis, then the same ellipse $\frac{%
x^{2}}{4}+\frac{y^{2}}{2}=1$ will be the locus of points $S_{4.5}(\widetilde{%
\Delta }),$ where $\widetilde{\Delta }$ is the reflection of $\Delta $
across the $x-$axis with vertices: $(0,-0.5),(-1,0.5)$ and $(1,0.5).$

Notice also that, in the second example, the locus of points is a circle. In
general, the locus of points is a circle exactly when, in equation (\ref{eq2}%
), 
\begin{equation*}
\mathcal{A}=\mathcal{C}\text{ and }\mathcal{B}=0.
\end{equation*}%
Equivalently, from (\ref{eq3}) we have;%
\begin{equation}
\frac{a^{2}}{p}+\frac{a^{2}}{q}=\frac{c^{2}}{p}+\frac{b^{2}}{q}+1\text{ and }%
\frac{2ac}{p}-\frac{2ab}{q}=0.  \label{eq5}
\end{equation}%
Substituting the values of $p$ and $q$ and simplifying we get;%
\begin{equation*}
\frac{2ac}{p}-\frac{2ab}{q}=0\Leftrightarrow (a^{2}-bc)(c-b)=0.
\end{equation*}%
Consequently, two cases have to be considered:

First, $a^{2}=bc,$ in which case the triangle $\Delta ABC$ is right angled.
This case does not occur, since the conditions in (\ref{eq5}) lead to a
contradiction as follows: $\frac{2ac}{p}-\frac{2ab}{q}=0$ implies $\frac{c}{p%
}=\frac{b}{q}$ or equivalently $\frac{bc}{p}=\frac{b^{2}}{q}.$ Since, $%
a^{2}=bc$ we get $\frac{a^{2}}{p}=\frac{b^{2}}{q}.$ Hence, the first
condition in (\ref{eq5}) simplifies into\ $\frac{a^{2}}{q}=\frac{c^{2}}{p}%
+1. $ This equation together with the relations $a^{2}=bc$ and $\frac{c}{p}=%
\frac{b}{q},$ yield a contradiction.

Second, $c=b,$ in which case the triangle $\Delta ABC$ is isosceles. In this
case, we have $p=q$. Substituting in the first condition of (\ref{eq5}) we
get $a^{2}+3b^{2}=2a^{2},$ which is equivalent to $a=\sqrt{3}b.$ Hence, the
vertices of the triangle are $A(0,\sqrt{3}b),B(-b,0)$ and $C(b,0)$. This
implies that the triangle is equilateral.

Therefore, we have the following result.

\begin{conclusion}
The locus $S_{k}(\Delta )$ of points that have a constant sum of squares of
distances from the sides of a given triangle $\Delta $ is a circle if and
only if the triangle $\Delta $ is equilateral. \ \ \ 
\end{conclusion}

\FRAME{ftbphFU}{3.723in}{2.7959in}{0pt}{\Qcb{The locus is an ellipse with $%
k=4.5$}}{\Qlb{fig5}}{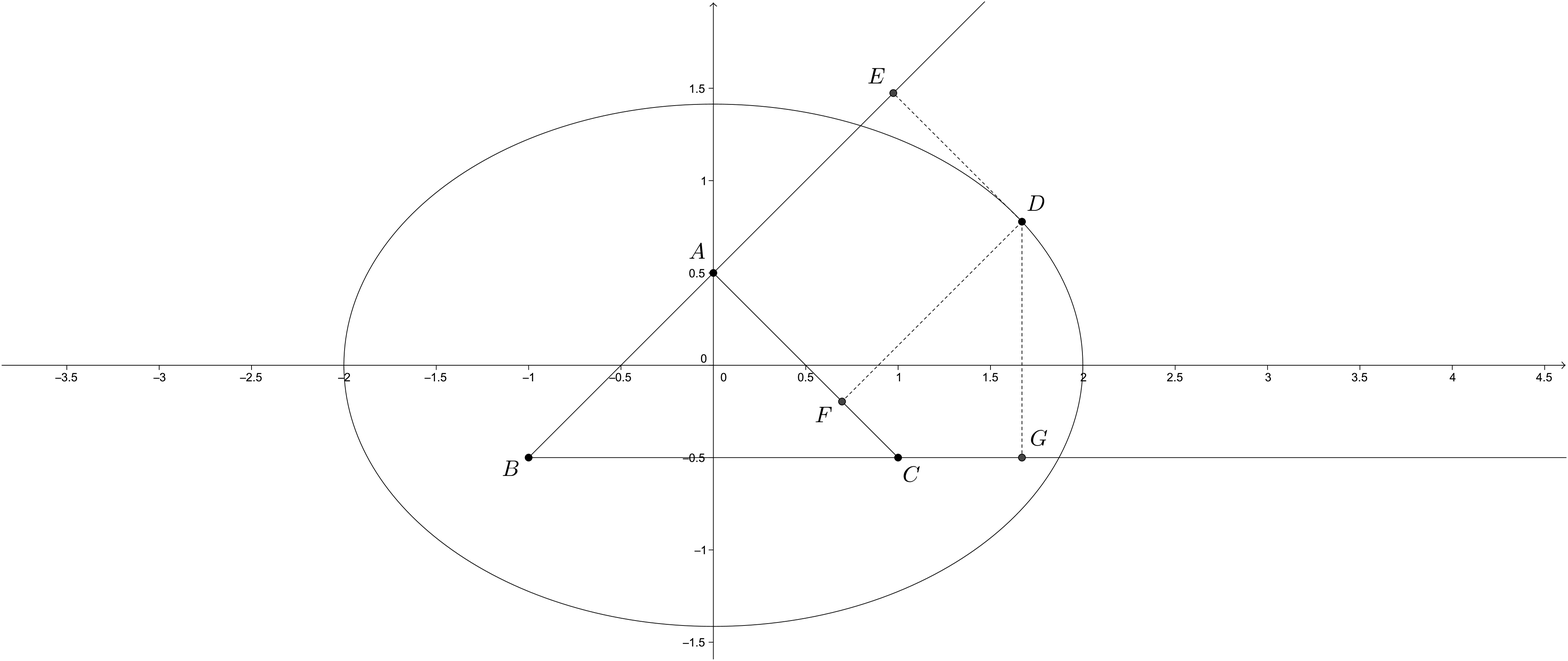}{\special{language
"Scientific Word";type "GRAPHIC";maintain-aspect-ratio TRUE;display
"USEDEF";valid_file "F";width 3.723in;height 2.7959in;depth
0pt;original-width 16.8647in;original-height 7.1857in;cropleft
"0.1750";croptop "1";cropright "0.7439";cropbottom "0";filename
'square-of-distances-k-is-4.eps';file-properties "XNPEU";}}

\FRAME{ftbphFU}{3.4463in}{2.949in}{0pt}{\Qcb{The locus is a circle with $%
k=10 $}}{\Qlb{fig6}}{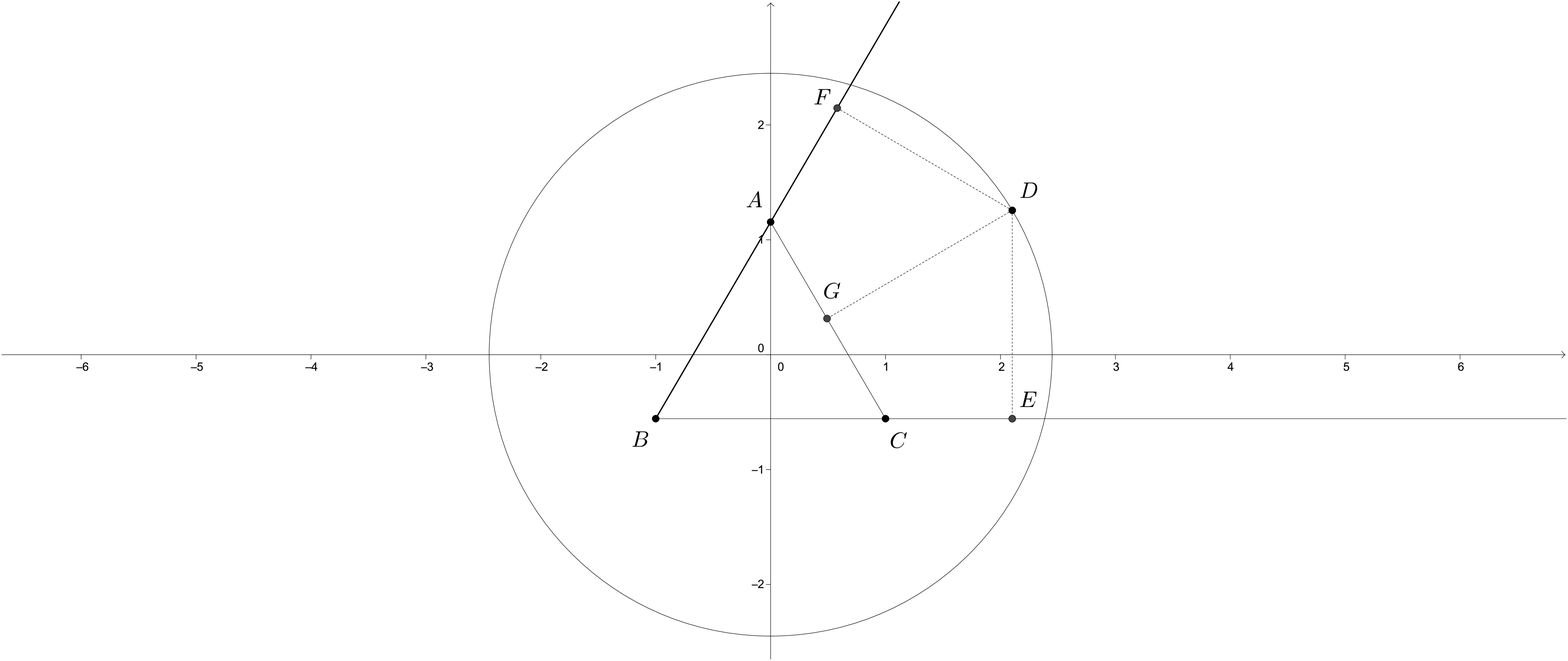}{\special{language
"Scientific Word";type "GRAPHIC";maintain-aspect-ratio TRUE;display
"USEDEF";valid_file "F";width 3.4463in;height 2.949in;depth
0pt;original-width 27.1205in;original-height 11.5513in;cropleft
"0.2234";croptop "1";cropright "0.7218";cropbottom "0";filename
'square-of-distances-k-is-10.eps';file-properties "XNPEU";}}

\subsection*{Minimal sum of squared distances}

Another interesting question is to find the minimal sum of squared
distances, from the sides of a given triangle. For the isosceles triangle
with vertices $A(0,a),B(-b,0)$ and $C(b,0),$ equation (\ref{eq3.5})
indicates that, as the right hand side approaches $0,$ the ellipse
degenerates to one point: $(0,\frac{2ab^{2}}{a^{2}+3b^{2}}).$ Thus, the
locus of points is defined exactly when%
\begin{equation*}
\frac{(a^{2}+b^{2})k-2a^{2}b^{2}}{2a^{2}(a^{2}+3b^{2})}+\frac{2b^{4}}{%
(a^{2}+3b^{2})^{2}}\geq 0.
\end{equation*}%
Equivalently, 
\begin{equation*}
k\geq \frac{2a^{2}b^{2}}{a^{2}+3b^{2}}.
\end{equation*}%
Therefore the following consequence holds.

\begin{conclusion}
\label{Con2}For the isosceles triangle with vertices $A(0,a),B(-b,0)$ and $%
C(b,0),$ the minimal sum of squared distances from the sides is $\frac{%
2a^{2}b^{2}}{a^{2}+3b^{2}}$ attained at the point $(0,\frac{2ab^{2}}{%
a^{2}+3b^{2}}),$ inside the triangle.
\end{conclusion}

When $a^{2}=3b^{2}$ then the triangle is equilateral with vertices $A(0,%
\sqrt{3}b),B(-b,0)$ and $C(b,0).$ In this case, the minimal sum of squared
distances is $b^{2}$ attained at the point $(0,\frac{\sqrt{3}}{3}b)$ which
is exactly the incenter of\ the equilateral triangle. In Fig. \ref%
{fig-minimal-k-circle}, $b=1$ and the loci of points are circles which
degenerate to the incenter of the equilateral triangle as $k$ approaches $1.$

\FRAME{ftbphFU}{4.3422in}{2.8314in}{0pt}{\Qcb{The circles degenerate to the
incenter $G$ of the equilateral triangle, where the minimum of $k$ is $1.$}}{%
\Qlb{fig-minimal-k-circle}}{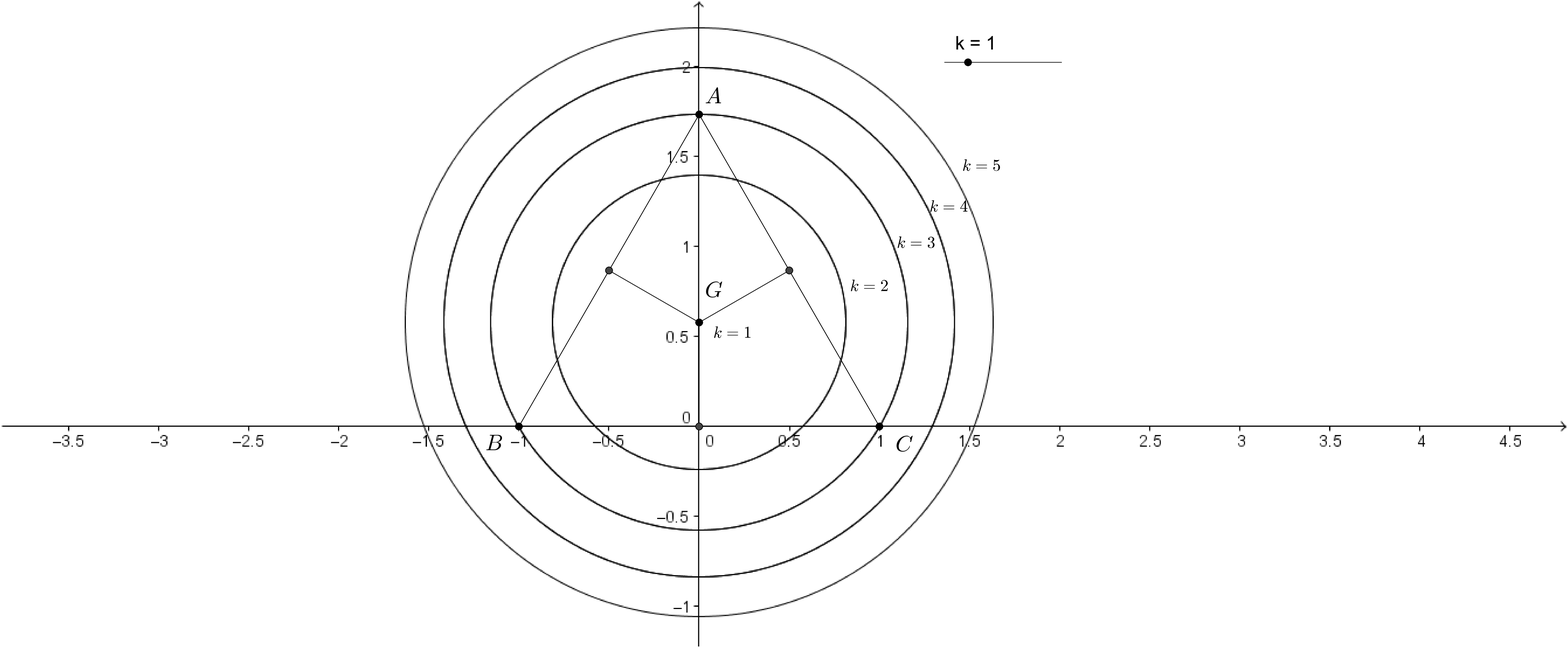}{\special{language
"Scientific Word";type "GRAPHIC";maintain-aspect-ratio TRUE;display
"USEDEF";valid_file "F";width 4.3422in;height 2.8314in;depth
0pt;original-width 34.193in;original-height 14.3213in;cropleft
"0.1340";croptop "1";cropright "0.7785";cropbottom "0";filename
'minimal-k-circle.eps';file-properties "XNPEU";}}

\section*{Concluding remarks}

Other related results are the following: Kawasaki \cite[p. 213]{Kaw}, with a
proof without words, used only rotations to establish Viviani's theorem.
Polster \cite{Pol}, considered a natural twist to a beautiful one-glance
proof of Viviani's theorem and its implications for general triangles.

The restriction of the locus, in the first question, to subsets of the
closed triangle is intended to avoid the use of the signed distances. These
signed distances, when considered, allows us to search loci of points
outside the triangle for "large" values of the constant $k.$ The reader is
encouraged to do some examples.

Theorem \ref{convex polygon}, allows us to address Question 1 to any convex
polygon in the plane.

The question about loci of points, with constant sum of squares of
distances, can be generalized to any polygon or any set of lines in the
plane. In this case, one should distinguish whether all the lines are
parallel or not. The reader is encouraged to work out some examples using
GeoGebra.

Conclusion \ref{Con2} can be restated for general triangles, by a change of
the coordinate system. This demands familiarity with the following theorem
(which is beyond our discussion): Every real quadratic form $q=X^{T}AX$ with
symmetric matrix $A$ can be reduced by an orthogonal transformation to a
canonical form.

Finally, this characterization of the ellipse was exploited to build an
algorithm for drawing ellipses using GeoGebra (see \cite{abb0}).

\ 

{\large Acknowledgement}: \textit{The author is indebted to the referees,
who read the manuscript carefully, and whose valuable comments concerning
the style, the examples, the theorems and the proofs improved substantially
the exposition of the paper. Special thanks are also due to the editor for
his valuable remarks}.

\textit{This work \textit{is part of research which }was supported by Beit
Berl College Research Fund.}

\textit{\ }

\bigskip

\ ELIAS ABBOUD (MR author ID: 249090) received his D.Sc from the Technion,
Israel. Since 1992 he has taught Mathematics at Beit Berl College. He serves
as the Math Chair in the Arab Academic Institution within the Faculty of
Education of Beit Berl College.


\begin{thebibliography}{9}
\bibitem{abb0} E. Abboud, Algorithms for Drawing Ellipses Using GeoGebra, 
\textit{Mathematics \& Computer Education}, \textbf{49}(3) (2015), 183-193.

\bibitem{Abb} E. Abboud, Viviani's Theorem and its Extension, \textit{%
College Math. J.} \textbf{41} (2010), 207-215.

\bibitem{CL} Z. Chen and T. Liang, The converse of Viviani's theorem, 
\textit{College Math. J.\ }\textbf{37}\textit{\ }(2006), 390-391.

\bibitem{Kaw} K. Kawasaki, Proof Without Words: Viviani's theorem , \textit{%
Math. Mag.\ }\textbf{78 }(2005), p. 213.

\bibitem{Pol} B. Polster, Viviani \`{a} la kawasaki: Take two, \textit{Math.
Mag.} 87(2014), 280-283.

\bibitem{Sam} H. Samelson, Proof without words: Viviani's theorem with
vectors, \textit{Math. Mag.\ }\textbf{76}\textit{\ }(2003), p. 225.
\end{thebibliography}
\end{document}